\newcommand{\R}{\mathbb{R}}
\newcommand{\p}{\mathbf{p}}
\newcommand{\ovP}{\overline{P}}
\newcommand{\ovOm}{\overline{\Omega}}
\newcommand{\f}{\mathbf{f}}
\newcommand{\x}{\mathbf{x}}
\newcommand{\E}[1]{\mathbb{E}[#1]}
\newcommand{\om}{\boldsymbol{\omega}}
\newcommand{\GRelax}[4]{G^{#1}_{#2}(#3,#4)}
\newcommand{\sumi}{\sum\limits_{i=1}^n}
\renewcommand{\P}[1]{\mathbb{P}(#1)}
\newcommand{\bbP}{\mathbb{P}}
\newcommand{\bbE}{\mathbb{E}}
\newcommand{\bom}{\boldsymbol{\omega}}
\newcommand{\bphi}{\boldsymbol{\phi}}
\newcommand{\bpsi}{\boldsymbol{\psi}}
\newcommand{\bs}[1]{{\mathbf{#1}}}
\newcommand{\commentout}[1]{}
\newtheorem{theorem}{Theorem}
\newtheorem{lemma}[theorem]{Lemma}
\newtheorem{corollary}[theorem]{Corollary}
\newtheorem{assumption}[theorem]{Assumption}
\theoremstyle{definition}
\newtheorem{definition}[theorem]{Definition}
\theoremstyle{remark}
\title{\LARGE \bf
Convex Relaxations for Nonlinear Stochastic Optimal Control Problems
}
\author{Yuanxun Shao$^{1}$, Dillard Robertson$^{1}$ and Joseph K. Scott$^{1}$
\thanks{$^{1}$ Department of Chemical and Biomolecular Engineering, Clemson University, Clemson, SC.
	{\tt\small jks9@clemson.edu}}   
}
\begin{document}

\maketitle
\thispagestyle{empty}
\pagestyle{empty}

\begin{abstract}
This article presents a new method for computing guaranteed convex and concave relaxations of nonlinear stochastic optimal control problems with final-time expected-value cost functions. This method is motivated by similar methods for deterministic optimal control problems, which have been successfully applied within spatial branch-and-bound (B\&B) techniques to obtain guaranteed global optima. Relative to those methods, a key challenge here is that the expected-value cost function cannot be expressed analytically in closed form. Nonetheless, the presented relaxations provide rigorous lower and upper bounds on the optimal objective value with no sample-based approximation error. In principle, this enables the use of spatial B\&B global optimization techniques, but we leave the details of such an algorithm for future work. 
\end{abstract}

\section{Introduction}
\label{Sec:Intro}
This article concerns the guaranteed global solution of the stochastic optimal control problem stated informally as
\begin{alignat}{2}
\label{Eq: Intro Opt}
&\min\limits_{\p\in \ovP\subset\R^{n_p}} \quad    &&\E{g(\p,\om,\x(t_f,\p,\om))}, \\
&\quad \ \text{s.t.} \quad    &&\bbP[\bs{h}(\p,\om,\x(t_f,\p,\om))\leq\bs{0}]\geq 1-\alpha, \nonumber
\end{alignat}
where $\mathbb{E}$ and $\bbP$ denote the expected value and probability over continuous random variables $\om\in\ovOm\subset\mathbb{R}^{n_{\om}}$, respectively, and $\x(t_f,\p,\om)$ is the solution of the nonlinear ordinary differential equations (ODEs)
\begin{alignat}{2}
\label{Eq: ODEs}
&\dot{\x}(t,\p,\om)		    	    &&  = \f(t,\p,\om,\x(t,\p,\om)),\\
&\x(t_0,\p,\om)						&&  = \x_0(\p,\om) \nonumber.
\end{alignat}
The decision vector $\p$ may represent a parameterized open-loop control trajectory, parameters in an explicit feedback controller embedded in \eqref{Eq: ODEs}, etc. Such problems arise in stochastic model predictive control \cite{Mesbah2016}, renewable energy systems \cite{Hakizimana:2016}, trajectory planning \cite{Blackmore:2011}, chemical process control \cite{Behrooz2017}, and many other applications.

For optimal control problems with deterministic objectives and constraints, a number of algorithms have recently been developed that can provide guaranteed global solutions \cite{Scott:DAEForum,Houska:2014,Zhao:2011,Papamichail:RigBBB}. In brief, these methods are predicated on effective algorithms for enclosing the reachable set of the dynamics on subintervals of the decision space. These enclosures can take the form of fixed interval bounds or other fixed sets \cite{Zhao:2011}, but are more commonly described by bounds that depend affinely or convexly on the decisions $\bs{p}$ \cite{Singer:nonlinear,Scott:2013b,Scott:2GODERel,Sahlodin:2011,Sahlodin:DTRODERel,Papamichail:RigBBB}. With such an enclosure, it is possible to construct convex relaxations of the optimal control problem on arbitrary subintervals of the decision space, and using these, to compute bounds on the optimal objective value on such subintervals. Finally, these bounds can be used within a generic spatial branch-and-bound (B\&B) algorithm \cite{Horst:GOtext1} to obtain a rigorous global solution.

To the best of our knowledge, no such guaranteed global optimization algorithm is available for the stochastic problem \eqref{Eq: Intro Opt}. In this case, a critical new challenge is that the expected-value and probability appearing in the objective and constraints cannot be evaluated analytically in closed form, and must be evaluated instead by sampling. In the context of the optimization approach outlined above, this is problematic because it is no longer possible to obtain guaranteed bounds on the optimal objective value. In fact, using only sample-based approximations, it is not even possible to bound the objective and constraint values at a given feasible point with finitely many computations.

In practice, this problem is most commonly addressed by replacing the objective and constraints in \eqref{Eq: Intro Opt} by sample-average approximations (SAA), resulting in a deterministic optimal control problem that can be solved using existing methods. However, SAA has several critical limitations that can lead to inaccurate solutions or excessive computational cost. First, it only guarantees convergence to a global solution as the sample size tends to infinity \cite{Shapiro2008}. Moreover, the number of samples required to achieve a high-quality solution in practice is unknown and can be quite large \cite{Verweij2003,Linderoth2006}. More importantly, a sufficient sample size is not known in advance. Thus, it is often necessary to solve several SAA problems with independent samples to assess solution accuracy, and to repeat the entire process if a larger sample size is deemed necessary \cite{Linderoth2006}. This is clearly problematic for nonconvex optimal control problems, where solving a single instance to global optimality is already demanding.

In this article, we take a first step towards extending the rigorous global optimization methods outlined above to the stochastic problem \eqref{Eq: Intro Opt}. Specifically, our main contribution is a new method for computing guaranteed convex and concave relaxations of the final-time expected-value objective function. As with the deterministic methods above, we rely on an existing method for computing time-varying bounds on the solutions of \eqref{Eq: ODEs} \cite{Scott:2013b}. However, we modify the method here to obtain lower and upper bounds that are convex and concave, respectively, with respect to both $\p$ and $\bom$. Through an application of Jensen's inequality, we then obtain a time-varying, $\p$-dependent convex enclosure of the mapping $t\mapsto \E{g(\p,\om,\x(t,\p,\om))}$. This relaxation method is similar in spirit to the so-called \emph{probability bounds} for dynamic systems in \cite{Enszer:2011,Enszer:2015}, but these works do not consider expected-value bounds and have not been applied in the context of optimization. Our method is also related to our recent work in \cite{Shao:2017} (Pre-print available at \url{osf.io/qab6n}), which described convex and concave relaxations of expected-value functions that do not depend on the solution of a dynamic system.

In the absence of chance constraints, the relaxation technique presented here can provide both lower and upper bounds on the optimal objective value of \eqref{Eq: Intro Opt}, without resorting to sample-based approximations. In principle, this enables the application of spatial B\&B to solve \eqref{Eq: Intro Opt} to guaranteed global optimality with no approximation error. However, we leave the details of such a B\&B algorithm, as well as the treatment of chance constraints, for future work. 

The remainder of this article is organized as follows. First, Section \ref{Sec:Problem Statement} gives a formal problem statement. Our new relaxation theory is then developed in two steps in Sections \ref{Sec: Relaxing the Dynamics} and \ref{Sec: Relaxing the Expected Value}. In Section \ref{Sec: Rigorous Bounds on Stochastic Optimal Control Problems}, we apply the developed relaxations to obtain computable upper and lower bounds on the optimal objective value of \eqref{Eq: Intro Opt} in the absence of chance constraints. In Section \ref{Sec: Numerical Example}, we demonstrate the proposed relaxation technique on a simple case study. Finally, Section \ref{Sec: Conclusion} provides concluding remarks.

\section{Problem Statement}
\label{Sec:Problem Statement}
Let $ I=[t_0,t_f] \subset \R$ be a time horizon of interest, let $\ovP \subset \R^{n_p} $ be a compact $ n_p $-dimensional interval of decision variables $\p$, and let $\bom$ be a random vector with probability density function (PDF) $p:\mathbb{R}^{n_{\omega}}\rightarrow\mathbb{R}$. We assume that $p$ is zero outside of a compact interval $\overline{\Omega}\subset \R^{n_{\omega}}$. Let $\x_0:\R^{n_p}\times\R^{n_{\omega}} \rightarrow \R^{n_x}$ and $\f:\R\times\R^{n_p}\times\R^{n_{\omega}}\times\R^{n_{x}} \rightarrow\R^{n_x} $ be locally Lipschitz continuous functions defining the dynamics \eqref{Eq: ODEs}. We assume that \eqref{Eq: ODEs} has a unique solution $\bs{x}(\cdot,\p,\om)$ on all of $I$ for every $(\p,\om)\in \ovP\times\ovOm$. Finally, let $g:\R^{n_p}\times\R^{n_{\omega}}\times\R^{n_{x}} \rightarrow\R$ and define
\begin{alignat}{1}
\mathcal{G}(\bs{p})&\equiv\bbE[g(\p,\om,\x(t_f,\p,\om))], \\
&= \int_{\ovOm}g(\p,\om,\x(t_f,\p,\om))p(\om)d\om,
\end{alignat}
which is assumed to exist for every $\p\in \ovP$.

We are interested in computing convex and concave relaxations, defined as follows.
\begin{definition}
\label{Def:Relaxation of functions}
Let $S\subset \R^n$ be convex and $h:S\rightarrow \R$. Functions $h^{cv},h^{cc}:S\rightarrow \R$ are \emph{convex and concave relaxations of $h$ on $S$}, respectively, if $h^{cv}$ is convex on $S$, $h^{cc}$ is concave on $S$, and 
\begin{alignat*}{1}
h^{cv}(\bs{s})\leq h(\bs{s})\leq h^{cc}(\bs{s}), \quad \forall \bs{s}\in S.
\end{alignat*}
\end{definition}

The objective of this article is to develop a method for computing convex and concave relaxations of $\mathcal{G}$ on any given subinterval of $\overline{P}$. Specifically, we are interested in relaxations of $\mathcal{G}$ itself, rather than any finite approximation of $\mathcal{G}$ via sampling, quadrature, etc. At the same time, the relaxations themselves must be finitely computable to be of value in the context of spatial B\&B.

The following general notation is used in the remainder of the article. For any $\bs{s}^L,\bs{s}^U\in\mathbb{R}^{n}$ with $\bs{s}^L\leq\bs{s}^U$, let $S=[\bs{s}^L,\bs{s}^U]$ denote the compact $n$-dimensional interval $\{\bs{s}\in\mathbb{R}^{n}:\bs{s}^L\leq\bs{s}\leq\bs{s}^U\}$. Moreover, for $\overline{S}\subset \mathbb{R}^{n}$, let $\mathbb{I}\overline{S}$ denote the set of all compact interval subsets $S$ of $\overline{S}$. In particular, let $\mathbb{IR}^n$ denote the set of all compact interval subsets of $\mathbb{R}^n$.

\section{Relaxing the Dynamics on $P\times\Omega$}
\label{Sec: Relaxing the Dynamics}
The first step in our relaxation procedure is to compute convex and concave relaxations of the function $G:\mathbb{R}^{n_p}\times\mathbb{R}^{n_{\omega}}\rightarrow\mathbb{R}$ defined by
\begin{alignat}{1}
\label{Eq: G Def}
G(\p,\om)\equiv g(\p,\om,\x(t_f,\p,\om)).
\end{alignat}
Specifically, we will show in \S\ref{Sec: Relaxing the Expected Value} that the desired relaxations of the expected value $\mathcal{G}(\p)=\bbE[G(\p,\om)]$ can be readily computed from convex and concave relaxations of $G$ \emph{jointly} with respect to $\p$ and $\om$. Assuming that $g$ is known in closed form, and hence amenable to standard relaxation techniques \cite{McCormick:1976,Sahinidis:ConvRel}, the only complication in computing such joint relaxations of $G$ is the presence of the terminal time state vector $\x(t_f,\p,\om)$ in \eqref{Eq: G Def}, which we naturally assume is not known in closed form. To deal with this, we will construct joint \emph{state relaxations} defined as follows.

\begin{definition}
\label{Def:State Relaxations}
Choose any intervals $P\in\mathbb{I}\ovP$ and $\Omega\in\mathbb{I}\ovOm$. Two functions $ \x^{cv}, \x^{cc}: I\times P\times \Omega \rightarrow\R^{n_x} $ are called \emph{state relaxations} for \eqref{Eq: ODEs} on $P\times\Omega$ if $ \x^{cv}(t ,\cdot ,\cdot ) $ and $ \x^{cc}(t , \cdot ,\cdot ) $ are, respectively, convex and concave relaxations of $ \x(t ,\cdot ,\cdot ) $ on $P\times\Omega$, for every $t\in I$.
\end{definition}

Several methods have been developed for computing state relaxations in the deterministic case \cite{Singer:nonlinear,Scott:2013b,Scott:2GODERel,Sahlodin:2011,Sahlodin:DTRODERel,Papamichail:RigBBB}. Here, we extend the method in \cite{Scott:2013b} to produce joint relaxations on $P\times\Omega$. However, because Definition \ref{Def:State Relaxations} makes no mathematical distinction between the decisions $\p$ and the RVs $\bom$, the extension is direct (our overall relaxation procedure treats $\p$ and $\bom$ differently beginning in \S\ref{Sec: Relaxing the Expected Value}).

The method in \cite{Scott:2013b} computes state relaxations as the solutions of an auxiliary system of ODEs. Defining this system requires particular kinds of relaxations of the functions $\bs{x}_0$, $\bs{f}$ and $g$ to be available, which we now assume. Although the following assumptions may seem restrictive, the required relaxations can be automatically constructed for nearly any functions $\bs{x}_0$, $\bs{f}$, and $g$ through a generalization of McCormick's relaxation technique, as discussed in detail in \cite{Scott:2013b,Scott:GenMCRel}.

\begin{assumption}
\label{Assum: Relaxation Functions}
Assume that the following functions are available for any intervals $P\times\Omega\in\mathbb{I}\ovP\times\mathbb{I}\ovOm$:
\begin{enumerate}
\item $\bs{x}_{0,P\times\Omega}^{cv},\bs{x}_{0,P\times\Omega}^{cc}:P\times\Omega\rightarrow\R^{n_{x}}$ are continuous convex and concave relaxations of $\bs{x}_0$ on $P\times\Omega$.
\item $\bs{f}^{cv}_{P\times\Omega},\bs{f}^{cc}_{P\times\Omega}:I\times P\times\Omega\times\mathbb{R}^{n_x}\times\mathbb{R}^{n_x}\rightarrow\mathbb{R}^{n_x}$ are Lipschitz continuous and satisfy the following condition: For any continuous $\bphi,\bpsi:I\times P\times \Omega\rightarrow\mathbb{R}^{n_x}$ and any fixed $t\in I$, the functions
\begin{alignat}{1}
(\p,\om)&\mapsto\bs{f}^{cv}_{P\times\Omega}(t,\p,\om,\bphi(t,\p,\om),\bpsi(t,\p,\om)),\\
(\p,\om)&\mapsto\bs{f}^{cc}_{P\times\Omega}(t,\p,\om,\bphi(t,\p,\om),\bpsi(t,\p,\om)),
\end{alignat}
are respectively convex and concave relaxations of
\begin{alignat}{1}
(\p,\om)&\mapsto\bs{f}(t,\p,\om,\x(t,\p,\om))
\end{alignat}
on $P\times \Omega$, provided that $\bphi(t,\cdot,\cdot)$ and $\bpsi(t,\cdot,\cdot)$ are respectively convex and concave relaxations of $\bs{x}(t,\cdot,\cdot)$ on $P\times\Omega$.
\item $g^{cv}_{P\times\Omega},g^{cc}_{P\times\Omega}:P\times\Omega\times\mathbb{R}^{n_x}\times\mathbb{R}^{n_x}\rightarrow\mathbb{R}^{n_x}$ are Lipschitz continuous and satisfy the following condition: For any continuous $\bphi,\bpsi:P\times \Omega\rightarrow\mathbb{R}^{n_x}$, the functions
\begin{alignat}{1}
(\p,\om)&\mapsto g^{cv}_{P\times\Omega}(\p,\om,\bphi(\p,\om),\bpsi(\p,\om)),\\
(\p,\om)&\mapsto g^{cc}_{P\times\Omega}(\p,\om,\bphi(\p,\om),\bpsi(\p,\om)),
\end{alignat}
are respectively convex and concave relaxations of
\begin{alignat}{1}
(\p,\om)&\mapsto g(\p,\om,\x(t_f,\p,\om))
\end{alignat}
on $P\times \Omega$, provided that $\bphi$ and $\bpsi$ are respectively convex and concave relaxations of $\bs{x}(t_f,\cdot,\cdot)$ on $P\times\Omega$.
\end{enumerate}
\end{assumption}

Under Assumption \ref{Assum: Relaxation Functions}, the following theorem provides state relaxations for \eqref{Eq: ODEs} and the desired relaxations of $G$.


\begin{theorem}
\label{Thm:State and G Rels}
Choose any $P\times\Omega\in\mathbb{I}\ovP\times\mathbb{I}\ovOm$ and define the auxiliary system ODEs:
	\begin{alignat}{2}
	&\dot{\x}^{cv}(t,\p,\om)		    	    &&  = \f^{cv}_{P\times\Omega}(t,\p,\om, \x^{cv}(t,\p,\om), \x^{cc}(t,\p,\om)), \nonumber\\
	&\dot{\x}^{cc}(t,\p,\om)		    	    &&  = \f^{cc}_{P\times\Omega}(t,\p,\om, \x^{cv}(t,\p,\om), \x^{cc}(t,\p,\om)), \nonumber\\
	&\x^{cv}(t_0,\p,\om)						&&  = \x^{cv}_{0,P\times\Omega}(\p,\om), \nonumber\\
	\label{Eq:State Rel ODEs}
	&\x^{cc}(t_0,\p,\om)						&&  = \x^{cc}_{0,P\times\Omega}(\p,\om),
	\end{alignat}
	for all $ (t,\p, \om) \in I \times P\times\Omega$. This system has unique solutions $\bs{x}^{cv},\bs{x}^{cc}:I\times P\times\Omega\rightarrow\mathbb{R}^{n_x}$, and these solutions are state relaxations for \eqref{Eq: ODEs} on $P\times \Omega$. Moreover, the functions
	\begin{alignat*}{1}
	G^{cv}_{P\times\Omega}(\bs{p},\bom) &\equiv g_{P\times\Omega}^{cv}(\p,\bom,\x^{cv}(t_f,\p,\om),\x^{cc}(t_f,\p,\om)), \\
	G^{cc}_{P\times\Omega}(\bs{p},\bom) &\equiv g_{P\times\Omega}^{cc}(\p,\bom,\x^{cv}(t_f,\p,\om),\x^{cc}(t_f,\p,\om)),
	\end{alignat*}
	are convex and concave relaxations of $G$ on $P\times\Omega$.
\end{theorem}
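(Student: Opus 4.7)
The plan is to deduce the result by combining the hypotheses of Assumption \ref{Assum: Relaxation Functions} with the main technical argument of \cite{Scott:2013b}, exploiting the fact that nothing in that argument distinguishes between decision variables and random variables---only that the joint argument $(\p,\om)$ ranges over a compact interval. Accordingly, I would organize the proof into three stages: existence and uniqueness of the auxiliary trajectories, verification that they are state relaxations, and application of Assumption \ref{Assum: Relaxation Functions}(3) to recover the relaxations of $G$.

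First I would establish existence, uniqueness, and joint continuity of $\x^{cv}$ and $\x^{cc}$ on $I\times P\times\Omega$. By Assumption \ref{Assum: Relaxation Functions}(2), the right-hand side of \eqref{Eq:State Rel ODEs} is Lipschitz continuous in the coupled state $(\x^{cv},\x^{cc})$ uniformly in $(t,\p,\om)\in I\times P\times\Omega$; by Assumption \ref{Assum: Relaxation Functions}(1) the initial conditions are continuous in $(\p,\om)$. A pointwise-in-$(\p,\om)$ Picard--Lindel\"of argument together with continuous dependence on parameters then yields the required conclusions.

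The main step---and the principal obstacle---is to show that these solutions are state relaxations in the sense of Definition \ref{Def:State Relaxations}, i.e.\ that for every $t\in I$ the map $\x^{cv}(t,\cdot,\cdot)$ is convex on $P\times\Omega$, $\x^{cc}(t,\cdot,\cdot)$ is concave, and $\x^{cv}(t,\p,\om)\leq\x(t,\p,\om)\leq\x^{cc}(t,\p,\om)$ pointwise. Following \cite{Scott:2013b}, I would establish all three properties simultaneously by a forward-Euler approximation of \eqref{Eq:State Rel ODEs} and \eqref{Eq: ODEs} on a mesh of $I$ and an induction on the Euler step. At each mesh point $t_k$, Assumption \ref{Assum: Relaxation Functions}(2) guarantees that the composed mapping $(\p,\om)\mapsto\f^{cv}_{P\times\Omega}(t_k,\p,\om,\bphi_k(\p,\om),\bpsi_k(\p,\om))$ is itself a convex relaxation of $(\p,\om)\mapsto\f(t_k,\p,\om,\x(t_k,\p,\om))$ whenever $\bphi_k$, $\bpsi_k$ are state relaxations at $t_k$, with the symmetric statement for the concave update. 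Because nonnegative combinations of convex (resp.\ concave) functions remain convex (resp.\ concave), the Euler iterate preserves the convexity/concavity of the relaxation pair; likewise, the bounding inequality propagates at each step because $\f^{cv}_{P\times\Omega}$ lies below and $\f^{cc}_{P\times\Omega}$ above the true right-hand side on the iterates produced so far. Passing to the mesh-refinement limit, standard convergence of Euler schemes for Lipschitz right-hand sides (secured by a Gronwall estimate) gives uniform convergence to the true auxiliary solutions, while pointwise limits of convex (resp.\ concave) functions remain convex (resp.\ concave), preserving all three properties in the limit.

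Finally, with $\x^{cv}(t_f,\cdot,\cdot)$ and $\x^{cc}(t_f,\cdot,\cdot)$ shown to be convex and concave relaxations of $\x(t_f,\cdot,\cdot)$ on $P\times\Omega$, Assumption \ref{Assum: Relaxation Functions}(3) applies verbatim with $\bphi=\x^{cv}(t_f,\cdot,\cdot)$ and $\bpsi=\x^{cc}(t_f,\cdot,\cdot)$, immediately delivering the convex and concave relaxations $G^{cv}_{P\times\Omega}$ and $G^{cc}_{P\times\Omega}$ of $G$ on $P\times\Omega$. The hard part is genuinely the Euler-approximation argument in the second stage; however, since Definition \ref{Def:State Relaxations} and Assumption \ref{Assum: Relaxation Functions} treat $\p$ and $\om$ symmetrically as coordinates of a joint input on a compact interval, the deterministic argument of \cite{Scott:2013b} carries over without modification, and the main conceptual task is simply to verify that no step of that argument implicitly relies on structural features that distinguish decisions from random variables.
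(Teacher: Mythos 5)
Your proposal is correct and takes essentially the same route as the paper: the paper's entire proof consists of invoking Theorem 4.1 of \cite{Scott:2013b} with the substitution $P:=P\times\Omega$ (precisely the ``$\p$ and $\om$ are treated symmetrically'' observation you make) to obtain existence, uniqueness, and the state-relaxation property, and then applying Condition 3 of Assumption \ref{Assum: Relaxation Functions} with $\bphi=\x^{cv}(t_f,\cdot,\cdot)$ and $\bpsi=\x^{cc}(t_f,\cdot,\cdot)$. The only difference is that you sketch a reconstruction of the cited theorem's proof (via Euler discretization and induction), whereas the paper simply cites it as a black box, so that work, while reasonable, is not needed here.
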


\begin{proof}
Under Conditions 1 and 2 of Assumption \ref{Assum: Relaxation Functions}, a direct application of Theorem 4.1 in \cite{Scott:2013b} with $P:= P\times \Omega$ ensures that $\bs{x}^{cv}$ and $\bs{x}^{cc}$ exist, are unique, and are state relaxations for \eqref{Eq: ODEs} on $P\times \Omega$. Thus, Condition 3 of Assumption \ref{Assum: Relaxation Functions} can be applied with $\bphi=\x^{cv}(t_f,\cdot,\cdot)$ and $\bpsi=\x^{cc}(t_f,\cdot,\cdot)$, and it follows that $G_{P\times\Omega}^{cv}$ and $G_{P\times\Omega}^{cc}$ are convex and concave relaxations of $G$ on $P\times\Omega$.
\end{proof}

Once the relaxations in Assumption \ref{Assum: Relaxation Functions} have been constructed (see \cite{Scott:2013b}), the initial value problem \ref{Eq:State Rel ODEs} can be solved for any $(\p,\bom)\in P\times \Omega$ using any standard ODE solver, after which $G_{P\times\Omega}^{cv}$ and $G_{P\times\Omega}^{cc}$ can be directly evaluated.

\section{Relaxing the Expected Value on $P$}
\label{Sec: Relaxing the Expected Value}
In this section, we develop a method for computing convex and concave relaxations of the expected cost function
\begin{alignat}{1}
\label{Eq: mathcalG ReDef}
\mathcal{G}(\p)=\bbE[G(\p,\om)] = \bbE[g(\p,\om,\x(t_f,\p,\om))]
\end{alignat}
on any given $P\in\mathbb{I}\ovP$. In light of Theorem \ref{Thm:State and G Rels}, we assume throughout this section that relaxations $G_{P\times\Omega}^{cv}$ and $G_{P\times\Omega}^{cc}$ of $G$ are available on any desired subinterval $P\times\Omega\in\mathbb{I}\ovP\times\mathbb{I}\ovOm$.

To begin, note that for any $P\in \mathbb{I}\ovP$ and $\p\in P$, $\mathcal{G}(\p)$ is bounded from above and below by the values $\bbE[G^{cc}_{P\times\ovOm}(\p,\om)]$ and $\bbE[G^{cv}_{P\times\ovOm}(\p,\om)]$, respectively, as a trivial consequence of integral monotonicity. Moreover, these functions can readily be shown to be concave and convex on $P$, respectively. However, relaxations defined in this way are of no value for B\&B global optimization since they must be evaluated by sampling in general, and so guaranteed bounds cannot be computed from such relaxations finitely. To overcome this limitation, we follow the technique recently proposed for standard stochastic programs (rather than optimal control problems) in \cite{Shao:2017}. Namely, we apply Jensen's inequality to pass the expectation operator inside the relaxation functions $G^{cc}_{P\times\ovOm}$ and $G^{cv}_{P\times\ovOm}$.

\begin{lemma}[Jensen's inequality]
\label{Lem:Jensen's Inequality}
Let $\Omega\subset\overline{\Omega}$ be convex and let $h:\Omega\rightarrow \R$. If $h$ is convex and $\E{h(\bom)}$ exists, then $\E{h(\bom)}\geq h(\E{\bom})$. If $h$ is concave, then $\E{h(\bom)}\leq h(\E{\bom})$.
\end{lemma}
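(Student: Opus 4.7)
The plan is to prove this via the classical supporting hyperplane argument for convex functions. The implicit setup requires $\bom \in \Omega$ almost surely so that $h(\bom)$ is well defined; then, since $\Omega$ is convex, the mean $\bom_0 \equiv \E{\bom}$ also lies in $\Omega$, being an element of the convex hull of the support of $\bom$.

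For the convex case, I would invoke the existence of a subgradient of $h$ at $\bom_0$: there exists $\bs{v} \in \R^{n_{\omega}}$ such that
\begin{equation*}
h(\bom) \geq h(\bom_0) + \bs{v}^T (\bom - \bom_0), \quad \forall \bom \in \Omega.
\end{equation*}
Since this inequality holds pointwise on $\Omega$, it holds almost surely for $\bom$. Taking expectations on both sides, using linearity of $\E{\cdot}$ and the identity $\E{\bom} - \bom_0 = \bs{0}$, one obtains
\begin{equation*}
\E{h(\bom)} \geq h(\bom_0) + \bs{v}^T\bs{0} = h(\E{\bom}),
\end{equation*}
as claimed. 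The concave case then follows immediately by applying this result to the convex function $-h$ and reversing the sign.

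The main (minor) technical obstacle is that the supporting hyperplane inequality is only guaranteed when $\bom_0$ lies in the relative interior of $\Omega$. If $\bom_0$ is a boundary point of $\Omega$, a limiting argument is needed: approximate $\bom_0$ by a sequence of interior points, apply the argument above at each such point, and pass to the limit using continuity of convex functions on the relative interior of their domain. Given that Jensen's inequality is a well-established classical result, however, the cleanest route for the paper is likely just to cite a standard reference in convex analysis or probability rather than include a self-contained proof.
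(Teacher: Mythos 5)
Your proposal is correct in substance but takes a genuinely different route from the paper: the paper offers no argument at all and simply cites Proposition 1.1 of Perlman (1974), which is precisely the option you identify at the end as the cleanest. Your self-contained supporting-hyperplane proof is the standard one and is fully rigorous whenever $\E{\bom}$ lies in the relative interior of $\Omega$ (note also that convexity of $\Omega$ alone does not place $\E{\bom}$ in $\Omega$ unless $\bom\in\Omega$ almost surely, which you correctly flag as an implicit hypothesis; in the paper this holds because the density vanishes outside $\ovOm$). The one step that would not quite go through as written is your patch for the boundary case: if $\bom_0=\E{\bom}$ sits on the relative boundary, approximating it by interior points $\bom_k$ gives $\E{h(\bom)}\geq h(\bom_k)+\bs{v}_k^{T}(\bom_0-\bom_k)$, but the subgradient inequality at $\bom_k$ evaluated at $\bom_0$ shows each such right-hand side is \emph{at most} $h(\bom_0)$, so the limit is bounded from the wrong side; moreover a convex function on a closed convex set can be discontinuous (jump upward) at boundary points, so continuity on the relative interior does not control $h(\bom_0)$ itself. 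The standard repair is different: a supporting hyperplane of the convex hull of the support at $\bom_0$ must contain the support almost surely (a nonnegative random variable with zero mean vanishes a.s.), so the distribution is concentrated on a lower-dimensional face and one concludes by induction on dimension. This is a minor technicality for a classical lemma, and given that the paper delegates it to the literature, your concluding recommendation to cite a reference is exactly what the authors do.
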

\begin{proof}
See Proposition 1.1 in \cite{Perlman:1974}.
\end{proof}

Although we could apply Jensen's inequality directly to the relaxations $G^{cv}_{P\times\ovOm}$ and $G^{cc}_{P\times\ovOm}$ on the whole uncertainty set $\ovOm$, this introduces conservatism in the resulting relaxations that cannot be controlled. In particular, relaxations defined in this way may not converge to $\mathcal{G}$ as the interval $P$ tends towards a singleton $[\p,\p]$ which is required for the convergence of spatial B\&B algorithms \cite{Horst:GOtext1}. Thus, we instead apply Jensen's inequality on a partition of $\ovOm$ that can be refined as needed.

\begin{definition}
\label{Def:Partition}
	A collection $\Phi=\{ \Omega_i \}_{i=1}^{n}$ of intervals $\Omega_i\in\mathbb{I}\ovOm$ is called an \emph{interval partition of $\ovOm$} if $\ovOm=\cup_{i=1}^n\Omega_i$ and $\mathrm{int}(\Omega_i)\cap\mathrm{int}(\Omega_j)=\emptyset$ for all distinct $i$ and $j$.
\end{definition}

\begin{definition}
\label{Def:Conditional EVal}
For any measurable $\Omega\subset\ovOm$, let $\mathbb{P}(\Omega)$ denote the probability of the event $\bom\in\Omega$, and let $\mathbb{E}[\cdot|\Omega]$ denote the conditional expected value conditioned on the event $\bom\in\Omega$.
\end{definition}

The following theorem provides the desired relaxations of $\mathcal{G}$. 

\begin{theorem}
\label{Thm: Relaxation of EVal}
	Let $\Phi=\{ \Omega_i \}_{i=1}^{n}$ be an interval partition of $\ovOm$. For every $P\in\mathbb{I}\ovP$ and every $\p\in P$, define 
	\begin{alignat}{1}
	\label{Eq:FcvXP definition}
	\mathcal{G}^{cv}_{P \times \Phi}(\p) &\equiv  \sumi \P{\Omega_i}\GRelax{cv}{P\times \Omega_i}{\p}{\mathbb{E}[\om| \Omega_i]}, \\
	\label{Eq:FccXP definition}
	\mathcal{G}^{cc}_{P \times \Phi}(\p) &\equiv  \sumi \P{\Omega_i}\GRelax{cc}{P\times \Omega_i}{\p}{\mathbb{E}[\om| \Omega_i]}.
	\end{alignat}
	Then $\mathcal{G}^{cv}_{P \times \Phi}$ and $\mathcal{G}^{cc}_{P \times \Phi}$ are convex and concave relaxations of $\mathcal{G}$ on $P$, respectively.\end{theorem}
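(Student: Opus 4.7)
The plan is to split $\mathcal{G}(\p)$ across the partition $\Phi$ via the law of total expectation, bound the resulting conditional expectation on each cell using the joint relaxation supplied by Theorem~\ref{Thm:State and G Rels}, and then collapse the inner expectation through Jensen's inequality. I describe the argument for $\mathcal{G}^{cv}_{P\times\Phi}$; the concave case is entirely symmetric, replacing $G^{cv}$ by $G^{cc}$ and using the concave half of Lemma~\ref{Lem:Jensen's Inequality}.

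First, since the $\Omega_i$ have pairwise disjoint interiors with union $\overline{\Omega}$ and $p$ vanishes outside $\overline{\Omega}$, the integral defining $\mathcal{G}$ decomposes as
\begin{equation*}
\mathcal{G}(\p) \;=\; \sum_{i=1}^n \int_{\Omega_i} G(\p,\om)\,p(\om)\,d\om \;=\; \sum_{i=1}^n \mathbb{P}(\Omega_i)\,\mathbb{E}[G(\p,\om)\mid \Omega_i],
\end{equation*}
where indices with $\mathbb{P}(\Omega_i)=0$ can be dropped harmlessly. For each $i$ with $\mathbb{P}(\Omega_i)>0$, Theorem~\ref{Thm:State and G Rels} supplies a joint convex relaxation $G^{cv}_{P\times\Omega_i}$ of $G$ on $P\times\Omega_i$, so pointwise $G^{cv}_{P\times\Omega_i}\le G$, and integration against the conditional density preserves this to give $\mathbb{E}[G^{cv}_{P\times\Omega_i}(\p,\om)\mid\Omega_i]\le \mathbb{E}[G(\p,\om)\mid\Omega_i]$. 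Joint convexity further implies that $\om\mapsto G^{cv}_{P\times\Omega_i}(\p,\om)$ is convex on the convex set $\Omega_i$, so Lemma~\ref{Lem:Jensen's Inequality} applied cell-by-cell yields
\begin{equation*}
G^{cv}_{P\times\Omega_i}\bigl(\p,\mathbb{E}[\om\mid\Omega_i]\bigr) \;\le\; \mathbb{E}[G^{cv}_{P\times\Omega_i}(\p,\om)\mid\Omega_i].
\end{equation*}
Chaining these inequalities, multiplying by $\mathbb{P}(\Omega_i)\ge 0$, and summing over $i$ gives $\mathcal{G}^{cv}_{P\times\Phi}(\p)\le\mathcal{G}(\p)$ for every $\p\in P$.

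Convexity of $\mathcal{G}^{cv}_{P\times\Phi}$ on $P$ follows because, for each $i$, the slice $\p\mapsto G^{cv}_{P\times\Omega_i}(\p,\mathbb{E}[\om\mid\Omega_i])$ is convex in $\p$ (again from joint convexity, now fixing the $\om$-argument), and $\mathcal{G}^{cv}_{P\times\Phi}$ is a nonnegative linear combination of such slices. The main subtlety I anticipate is verifying that the evaluation point $\mathbb{E}[\om\mid\Omega_i]$ actually lies in $\Omega_i$, which is needed both to invoke Jensen's inequality and to evaluate the relaxation there; this holds because each $\Omega_i$ is an interval, hence convex, and the conditional mean of a random vector supported in a convex set lies in that set. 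Beyond this observation, the proof is a routine chain of integral monotonicity and Jensen's inequality, and so should not require any deeper machinery.
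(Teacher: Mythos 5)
Your proof is correct and follows essentially the same route as the paper's: decompose $\mathcal{G}$ via the law of total expectation over the partition, apply integral monotonicity to pass to the relaxation $G^{cv}_{P\times\Omega_i}$ on each cell, and then use Jensen's inequality to evaluate at the conditional mean, with convexity following from the nonnegative combination of convex slices. Your added remarks on dropping zero-probability cells and on $\mathbb{E}[\om\mid\Omega_i]$ lying in the convex cell $\Omega_i$ are sensible details the paper leaves implicit.
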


\begin{proof}
By the law of total expectation (Proposition 5.1 in \cite{Ross:2002}), $\mathcal{G}(\p)$ can be expressed for any $\p\in \ovP$ as
\begin{alignat}{1}
\label{law of total expectation for F(x)}
\mathcal{G}(\p) & =\E{G(\p,\bom)}=\sumi \P{\Omega_i}\mathbb{E}[G(\p,\bom)|\Omega_i].
\end{alignat}
Thus, for any $P\in\mathbb{I}\ovP$ and any $\p\in P$, integral monotonicity and Jensen's inequality give,
\begin{alignat}{1}
\mathcal{G}(\p) & \geq \sumi \P{\Omega_i}\mathbb{E}[\GRelax{cv}{P\times \Omega_i}{\p}{\bom} | \Omega_i], \\
&\geq \sumi \P{\Omega_i}\GRelax{cv}{P\times \Omega_i}{\p}{\mathbb{E}[\bom | \Omega_i]}, \\
&=\mathcal{G}^{cv}_{P\times\Phi}(\p).
\end{alignat}
Noting that $\mathcal{G}^{cv}_{P\times\Phi}$ is a sum of convex functions on $P$, it must be convex itself, and is therefore a convex relaxation of $\mathcal{G}$ on $P$. The proof for $\mathcal{G}^{cc}_{P\times\Phi}$ is analogous.
\end{proof}

By considering an exhaustive partition of $\ovOm$, Theorem \ref{Thm: Relaxation of EVal} provides relaxations that are valid for the true expected value $\mathcal{G}$, rather than a finite approximation obtained via sampling or otherwise. Moreover, in contrast to sample-based approaches, the relaxations in Theorem \ref{Thm: Relaxation of EVal} can be evaluated finitely provided that the probabilities $\bbP[\Omega_i]$ and conditional expectations $\bbE[\bom|\Omega_i]$ are computable. This is clearly true if $\bom$ is uniformly distributed. On the other hand, directly evaluating these quantities for more general RVs often requires difficult multidimensional integrations. However, the article \cite{Shao:2017} presents an approach that avoids these computations for a variety of common distributions by using well-known change-of-variables formulas to reformulate the RVs of interest as uniform RVs (e.g., using the inverse CDF transform). Since our relaxation theory does not require linearity or convexity assumptions, using such transformations poses no additional difficulties.

Clearly, the use of an exhaustive partition of $\ovOm$ in Theorem \ref{Thm: Relaxation of EVal} is also a drawback, since in practice it limits our approach to models with a modest number of RVs. However, note that Theorem \ref{Thm: Relaxation of EVal} provides valid relaxations on \emph{any} partition $\Phi$, no matter how coarse. Thus, one can use partitions appropriate for any desired level of accuracy. In particular, in the context of spatial B\&B, coarse partitions may be sufficient to eliminate large regions of the search space from consideration, with fine partitions being required only in the vicinity of global optimizers. We leave the issue of effective partitioning rules for future work. We also leave for future work a formal analysis of the convergence of $\mathcal{G}^{cv}_{P\times\Phi}$ and $\mathcal{G}^{cc}_{P\times\Phi}$ to $\mathcal{G}$ as $P$ tends towards a singleton $[\p,\p]$. However, note that Theorem 5.4 in \cite{Shao:2017} shows that the expected-value relaxation strategy used in Theorem \ref{Thm: Relaxation of EVal} inherits the convergence properties of the integrand relaxations $G_{P\times\Omega_i}^{cv}$ and $G_{P\times\Omega_i}^{cc}$, provided that $\ovOm$ is partitioned sufficiently quickly as the width of $P$ diminishes. In turn, the convergence properties of $G_{P\times\Omega_i}^{cv}$ and $G_{P\times\Omega_i}^{cc}$ depend on those of the relaxations defined in Assumption \ref{Assum: Relaxation Functions}, and have been studied in detail in \cite{Schaber:2017,Schaber:Thesis}.

\section{Rigorous Bounds on Stochastic Optimal Control Problems}
\label{Sec: Rigorous Bounds on Stochastic Optimal Control Problems}
In this section, we apply the results of Sections \ref{Sec: Relaxing the Dynamics} and \ref{Sec: Relaxing the Expected Value} to establish computable upper and lower bounds on the optimal objective value of the stochastic optimal control problem
\begin{alignat}{2}
\label{Eq: Opt No Chance}
&\min\limits_{\p\in \ovP} \quad &&\mathcal{G}(\p)\equiv\E{g(\p,\om,\x(t_f,\p,\om))}.
\end{alignat} 
Specifically, in order to solve \eqref{Eq: Opt No Chance} to gauranteed global optimality using spatial B\&B, it is necessary to provide the B\&B routine with upper and lower bounds for \eqref{Eq: Opt No Chance} restricted to any given interval $P\in\mathbb{I}\ovP$. To obtain a lower bound, the standard approach is to minimize a convex relaxation over $P$, which is available via Theorem \ref{Thm: Relaxation of EVal}. To obtain an upper bound, one common approach is simply to  evaluate the objective at any feasible $\p\in P$. However, this is not possible for \eqref{Eq: Opt No Chance} because $\mathcal{G}(\p)$ cannot be evaluated finitely. Thus, a different approach is required to obtain a valid upper bound. In the following corollary, we employ a special case of the concave relaxation defined in Theorem \ref{Thm: Relaxation of EVal}.

\begin{corollary}
\label{Cor:Lower Bounding Problem}
Let $\Phi=\{ \Omega_i \}_{i=1}^{n}$ be an interval partition of $\ovOm$ and define the shorthand $\bom_i\equiv \bbE[\bom|\Omega_i]$. For any $P\in\mathbb{I}\ovP$, a lower bound on the optimal objective value of \eqref{Eq: Opt No Chance} restricted to $P$ is given by the optimal objective value of the following deterministic convex optimal control problem:
\begin{alignat}{2}
\label{Eq: Opt No Chance Relaxation}
&\min\limits_{\p\in P} \ && \sumi \P{\Omega_i}g^{cv}_{P\times \Omega_i}(\p,\bom_i,\x^{cv}_i(t_f,\p), \x^{cc}_i(t_f,\p)),
\end{alignat}
where $\x^{cv}_i(t,\p)$ and $\x^{cc}_i(t,\p)$ are the unique solutions of the $n$ independent systems of ODEs given for each $i\in\{1,\ldots,n\}$ by
\begin{alignat}{2}
	&\dot{\x}_i^{cv}(t,\p) && = \f^{cv}_{P\times\Omega_i}(t,\p,\om_i,\x^{cv}_i(t,\p), \x^{cc}_i(t,\p)), \nonumber\\
	&\dot{\x}_i^{cc}(t,\p) && = \f^{cc}_{P\times\Omega_i}(t,\p,\om_i,\x^{cv}_i(t,\p), \x^{cc}_i(t,\p)), \nonumber\\
	&\x^{cv}_i(t_0,\p) &&  = \x^{cv}_{0,P\times\Omega_i}(\p,\om_i), \nonumber\\
	\label{Eq:LBP ODEs}
	&\x^{cc}_i(t_0,\p) &&  = \x^{cc}_{0,P\times\Omega_i}(\p,\om_i).
	\end{alignat}
	Moreover, for any $\p\in P$, an upper bound on the optimal objective value of \eqref{Eq: Opt No Chance} restricted to $P$ is given by the value
	\begin{alignat}{1}
	\label{Eq: upper bound}
	\sumi \P{\Omega_i}g^{cc}_{[\p,\p]\times\Omega_i}(\p,\bom_i,\x^{cv}_i(t_f,\p), \x^{cc}_i(t_f,\p)),
\end{alignat}
where $\x^{cv}_i(t,\p)$ and $\x^{cc}_i(t,\p)$ now denote the unique solutions of the $n$ independent systems of ODEs given for each $i\in\{1,\ldots,n\}$ by
\begin{alignat}{2}
	&\dot{\x}_i^{cv}(t,\p) && = \f^{cv}_{[\p,\p]\times\Omega_i}(t,\p,\om_i,\x^{cv}_i(t,\p), \x^{cc}_i(t,\p)), \nonumber\\
	&\dot{\x}_i^{cc}(t,\p) && = \f^{cc}_{[\p,\p]\times\Omega_i}(t,\p,\om_i,\x^{cv}_i(t,\p), \x^{cc}_i(t,\p)), \nonumber\\
	&\x^{cv}_i(t_0,\p) &&  = \x^{cv}_{0,[\p,\p]\times\Omega_i}(\p,\om_i), \nonumber\\
	\label{Eq:UBP ODEs}
	&\x^{cc}_i(t_0,\p) &&  = \x^{cc}_{0,[\p,\p]\times\Omega_i}(\p,\om_i).
	\end{alignat}
\end{corollary}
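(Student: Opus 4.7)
The plan is to obtain both bounds as direct consequences of Theorem \ref{Thm: Relaxation of EVal}, combined with the explicit form of the joint state/objective relaxations from Theorem \ref{Thm:State and G Rels}, and then to verify that the specialization of the parametric ODEs \eqref{Eq:State Rel ODEs} at $\bom=\bom_i$ reproduces the systems \eqref{Eq:LBP ODEs} and \eqref{Eq:UBP ODEs}.

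For the lower bound, I would first apply Theorem \ref{Thm: Relaxation of EVal} with the given partition $\Phi$ to obtain a convex function $\mathcal{G}^{cv}_{P\times\Phi}$ on $P$ satisfying $\mathcal{G}^{cv}_{P\times\Phi}(\p)\leq \mathcal{G}(\p)$ for all $\p\in P$; minimizing both sides over $P$ preserves the inequality, so it suffices to show that \eqref{Eq:FcvXP definition} coincides with the objective in \eqref{Eq: Opt No Chance Relaxation}. By Theorem \ref{Thm:State and G Rels}, each term $\GRelax{cv}{P\times\Omega_i}{\p}{\bom_i}$ expands to $g^{cv}_{P\times\Omega_i}(\p,\bom_i,\x^{cv}(t_f,\p,\bom_i),\x^{cc}(t_f,\p,\bom_i))$, where $\x^{cv}$ and $\x^{cc}$ solve \eqref{Eq:State Rel ODEs} on $P\times\Omega_i$. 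Fixing $\bom=\bom_i$ in that system yields exactly \eqref{Eq:LBP ODEs}, and by uniqueness of ODE solutions the functions $\x^{cv}_i(\cdot,\p)$ and $\x^{cc}_i(\cdot,\p)$ introduced in the corollary agree with $\x^{cv}(\cdot,\p,\bom_i)$ and $\x^{cc}(\cdot,\p,\bom_i)$. Substituting this identification recovers the objective of \eqref{Eq: Opt No Chance Relaxation}.

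For the upper bound, I would apply the same machinery with $P$ replaced by the degenerate interval $[\p,\p]\in\mathbb{I}\ovP$. Theorem \ref{Thm: Relaxation of EVal} then produces a concave relaxation $\mathcal{G}^{cc}_{[\p,\p]\times\Phi}$ on $\{\p\}$, which by definition satisfies $\mathcal{G}(\p) \leq \mathcal{G}^{cc}_{[\p,\p]\times\Phi}(\p)$. Since $\p\in P$ is feasible for the restricted problem, $\min_{\p'\in P}\mathcal{G}(\p') \leq \mathcal{G}(\p)$, and chaining these two inequalities gives the desired upper bound. The identification of $\mathcal{G}^{cc}_{[\p,\p]\times\Phi}(\p)$ with the expression \eqref{Eq: upper bound} is the concave counterpart of the argument above, now invoking Theorem \ref{Thm:State and G Rels} on $[\p,\p]\times\Omega_i$ and specializing \eqref{Eq:State Rel ODEs} at the fixed values $\bom=\bom_i$ and $\p$ fixed, thereby reducing to \eqref{Eq:UBP ODEs}.

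The main obstacle is essentially notational rather than conceptual: one must verify rigorously that fixing an input of the relaxation ODEs at the conditional mean $\bom_i$ produces exactly the ODEs written in \eqref{Eq:LBP ODEs}--\eqref{Eq:UBP ODEs}, and that this ``partial evaluation'' is compatible with the uniqueness guarantee of Theorem \ref{Thm:State and G Rels}. A minor subtlety to flag is that, when applying Theorem \ref{Thm: Relaxation of EVal} to the degenerate interval $[\p,\p]$, convexity and concavity in $\p$ are trivially satisfied on a singleton, so all hypotheses of both theorems remain valid; the only substantive content in the upper bound is the concave Jensen inequality in the $\bom$-coordinate applied on each $\Omega_i$.
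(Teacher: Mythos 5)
Your proposal is correct and follows essentially the same route as the paper's proof: both bounds are obtained by invoking Theorem \ref{Thm: Relaxation of EVal} (with the degenerate interval $[\p,\p]$ for the upper bound) and then unpacking the definitions from Theorem \ref{Thm:State and G Rels} to identify the resulting expressions with \eqref{Eq: Opt No Chance Relaxation} and \eqref{Eq: upper bound}. The extra detail you supply on the ``partial evaluation'' of the state relaxation ODEs at $\bom=\bom_i$ and its compatibility with uniqueness is a legitimate elaboration of what the paper leaves implicit, not a different argument.
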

\begin{proof}
By Theorem \ref{Thm: Relaxation of EVal}, a lower bound on the optimal objective value of \eqref{Eq: Opt No Chance} restricted to $P$ is given by the optimal objective value of the convex problem $\min_{\p\in P}\mathcal{G}^{cv}_{P\times\Phi}(\p)$. Applying the definitions of $\mathcal{G}^{cv}_{P\times\Phi}$ and $G^{cv}_{P\times\Omega_i}$ from Theorem \ref{Thm:State and G Rels} and Theorem \ref{Thm: Relaxation of EVal}, this lower bounding problem is equivalent to \eqref{Eq: Opt No Chance Relaxation}.

For the upper bound, note that $\p$ is feasible in \eqref{Eq: Opt No Chance}, so it suffices to bound $\mathcal{G}(\p)$. Applying Theorem \ref{Thm: Relaxation of EVal} with the degenerate interval $P=[\p,\p]$, it follows that $\mathcal{G}(\p)$ is bounded above by $\mathcal{G}^{cc}_{[\p,\p]\times\Phi}(\p)$. Again applying the definitions of $\mathcal{G}^{cc}_{[\p,\p]\times\Phi}$ and $G^{cc}_{[\p,\p]\times\Omega_i}$ from Theorems \ref{Thm:State and G Rels} and \ref{Thm: Relaxation of EVal}, this upper bound is equivalent to \eqref{Eq: upper bound}.
\end{proof}

\section{Numerical Example}
\label{Sec: Numerical Example}
The following nonlinear ODEs describe a negative resistance circuit consisting of an inductor, a capacitor, and a resistive element in parallel, where $ x_1 $ is the current through the inductor and $ x_2 $ is the voltage across the capacitor \cite{Khalil:NonlinearSystems}:
	\begin{alignat}{2}
	\label{Eq: Circuit ODEs}
	& \dot{x}_1 &&  =p_1 x_2,\\
	& \dot{x}_2	&&  =-p_2(x_1-x_2+x_2^3/3), \nonumber \\
	& x_{0,1}	&&  = \omega_1, \nonumber \\
	& x_{0,2}	&&  = \omega_2. \nonumber
	\end{alignat}	
We take the initial conditions to be independent random variables, both following a truncated normal distribution with mean $\mu_1=\mu_2=1$, standard deviation $\sigma_1=\sigma_2=0.1$, and truncation range $\ovOm=[\mu_1-3\sigma_1,\mu_1+3\sigma_1] \times [\mu_2-3\sigma_2,\mu_2+3\sigma_2]$. The parameters $ p_1 $ and $ p_2 $ are the inverses of the inductance and capacitance, respectively, and are scaled so that \eqref{Eq: Circuit ODEs} is dimensionless.

Consider the problem of relaxing the expected-value function $\mathcal{G}(\p) =\E{x_1(t_f,\p,\om)}$ on the set $ \p\in P =[0.1,0.3] \times [0.1,0.3]$ with $[t_0,t_f]=[0,5]$. Clearly, this function cannot be evaluated analytically, so standard relaxation techniques are not applicable. To apply Theorems \ref{Thm:State and G Rels} and \ref{Thm: Relaxation of EVal}, we considered interval partitions $\Phi=\{ \Omega_i \}_{i=1}^{n}$ of $ \ovOm $ consisting of 1, 16, and 64 uniform subintervals. We computed relaxations of the initial condition functions and right-hand side functions in \eqref{Eq: Circuit ODEs} satisfying Assumption \ref{Assum: Relaxation Functions} using generalized McCormick relaxations \cite{Scott:GenMCRel} as described in \cite{Scott:2013b}. For every interval $P\times \Omega_i$, the state relaxation system defined in Theorem \ref{Thm:State and G Rels} was solved at the point $(\p,\bom_i)=(\p,\bbE[\bom|\Omega_i])$ using the code \verb-CVODE- in the Sundials Matlab Toolbox \cite{Hindmarsh:SUNDAILS} with default tolerances. Finally, relaxations of $\mathcal{G}$ were constructed by summing the resulting relaxations of $x_1$ at $t_f$ weighted by the probabilities $\mathbb{P}(\Omega_i)$, as described in Theorem \ref{Thm: Relaxation of EVal}.

Figure \ref{Figure example 1} shows the resulting convex and concave relaxations, along with several final time solutions of \eqref{Eq: Circuit ODEs} computed for random samples of $\bom\in \ovOm$ and a sample-average approximation of $\mathcal{G}$ computed using 200 samples. Note that the relaxations enclose the true expected value as per Theorem \ref{Thm: Relaxation of EVal}, but need not enclose all solutions of \eqref{Eq: Circuit ODEs} for sampled $\bom\in \ovOm$. Figure \ref{Figure example 1} shows that the relaxations become tighter as the partition $\Phi$ is refined, but the improvement from 16 to 64 subintervals is minor. This suggests that the proposed method can provide reasonably tight relaxations using fairly coarse partitions of the uncertainty space.

	\begin{figure}[thpb]
		\centering

		\includegraphics{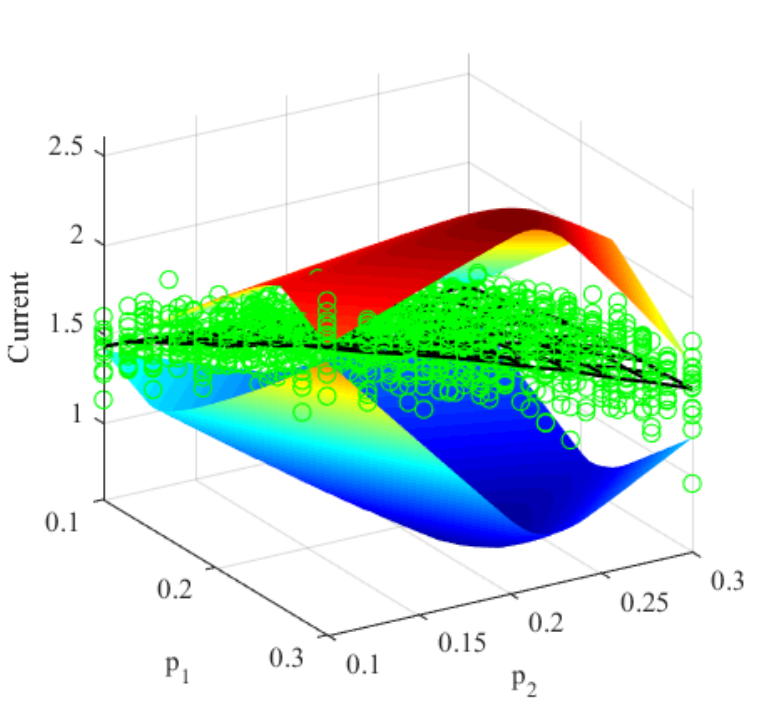}
		\includegraphics{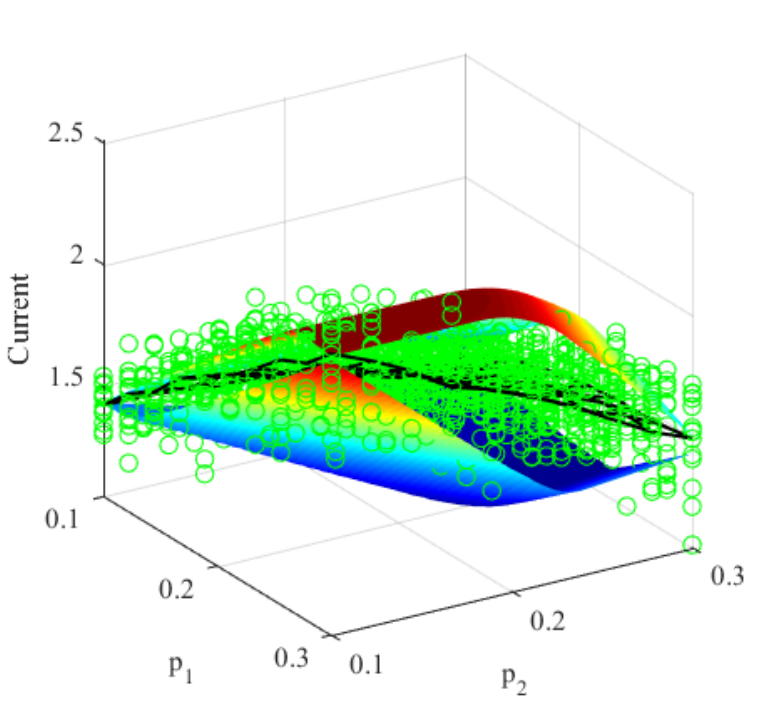}
		\includegraphics{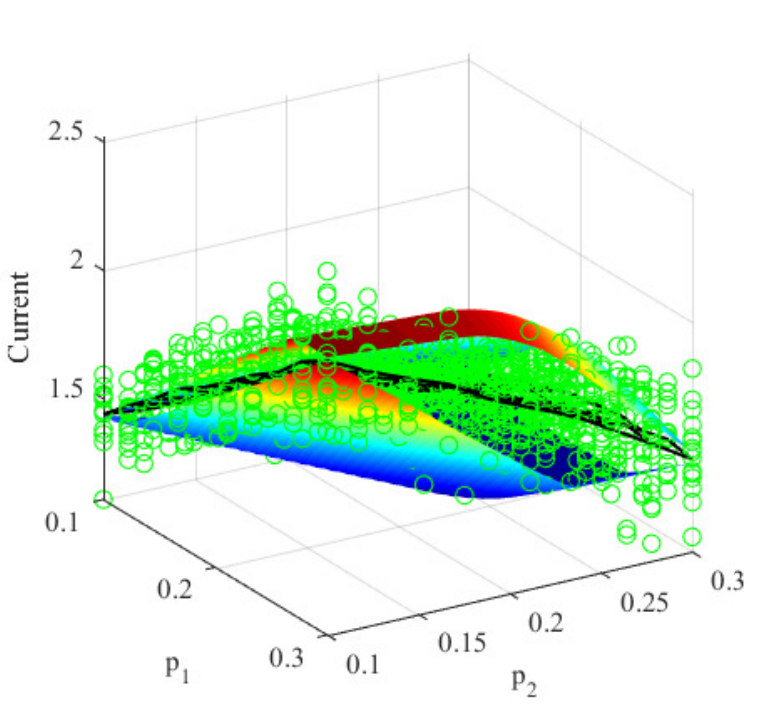}
		
		\caption{Convex and concave relaxations of $\mathcal{G}(\p)=\E{x_1(t_f,\p,\om)}$ on $P =[0.1,0.3] \times [0.1,0.3]$ (shaded surfaces) using partitions of $\overline{\Omega}$ into 1 (top), 16 (middle), and 64 (bottom) uniform subintervals, along with simulated values of $x_1(t_f,\p,\om)$ at sampled $\om$ values ($\circ$) and a sample-average approximation of $\mathcal{G}(\p)$ using 200 samples (black mesh).}
		\label{Figure example 1}
	\end{figure}

\section{Conclusions}
\label{Sec: Conclusion}
The main contribution of this article is a new approach for computing convex and concave relaxations of nonlinear stochastic optimal control problems with final-time expected-value cost functions. These relaxations can be used to compute rigorous upper and lower bounds on the optimal objective value of such problems restricted to any given subinterval of the decision space, as required for global optimization via spatial branch-and-bound. In this context, the key features of the presented relaxations are: (i) they provide valid bounds on the true objective function, without resorting to discrete approximations of either the expected value or the embedded dynamic model; (ii) they can be computed finitely, even when the objective function itself can only be approximated by sampling. Yet, both of these properties result from the use of an exhaustive partition of the uncertainty space (assumed compact), which limits the applicability of these relaxations to problems with a modest number of random variables. The presented case study suggests that the proposed relaxations can be made tight with a modest partition of the uncertainty space.



\footnotesize
\bibliographystyle{unsrt}
\bibliography{Master_YuanACC}

\end{document}